\theoremstyle{plain}
\newtheorem{theorem}[subsection]{{\bf Theorem}}
\newtheorem*{theorem*}{{\bf Theorem}}
\newtheorem*{corollary*}{{\bf Corollary}}
\newtheorem{proposition}[subsection]{{\bf Proposition}}
\newtheorem{lemma}[subsection]{{\bf Lemma}}
\newtheorem*{problem}{{\bf Problem}}
\theoremstyle{definition}
\theoremstyle{remark}
\newtheorem{example}[subsection]{{\it Example}}
\numberwithin{equation}{subsection}
\DeclareMathOperator{\PSL}{PSL}
\DeclareMathOperator{\PSU}{PSU}
\DeclareMathOperator{\Aut}{Aut}
\DeclareMathOperator{\SL}{SL}
\DeclareMathOperator{\GU}{GU}
\DeclareMathOperator{\GF}{GF}
\newcommand{\X}{\mathfrak X}
\newcommand{\C}{\mathfrak C}
\newcommand{\Z}{\mathbb Z}
\newcommand{\Q}{\mathbb Q}
\newcommand{\A}{\mathbb A}
\newcommand{\Sym}{\mathbb S}
\begin{document}
\baselineskip=12.5pt
\title[]
{Groups in which every non-cyclic subgroup contains its centralizer}
\author[C. Delizia]{Costantino Delizia}
\address{University of Salerno, Italy}
\email{cdelizia@unisa.it}
 \author[U. Jezernik]{Urban Jezernik}
\address{Institute of Mathematics, Physics, and Mechanics, Ljubljana, Slovenia}
\email{urban.jezernik@imfm.si}
\author[P. Moravec]{Primo\v z Moravec}
\address{University of Ljubljana, Slovenia}
\email{primoz.moravec@fmf.uni-lj.si}
\author[C. Nicotera]{Chiara Nicotera}
\address{University of Salerno, Italy}
\email{cnicoter@unisa.it}
\subjclass[2010]{20D15, 20F16, 20E32}
\keywords{Self-centralizing subgroup, $p$-group, simple group, supersoluble group}
\thanks{This joint work started during a visit of the first and the fourth author at the University of Ljubljana. They would like to thank the Department of Mathematics for its fine hospitality.}
\date{}
%%%%%%%%%%%%%%%%%%%%%%%%%%%%%%%%%%%%%%%%%%%%%%%%%%%%%%%%%%%%%%%%%%%%%
\renewcommand{\abstractname}{Abstract}
\begin{abstract}
\noindent
We study groups having the property that every non-cyclic subgroup contains its centralizer. The structure of nilpotent
and supersolvable groups in this class is described. We also classify finite $p$-groups and finite simple groups with the above defined property.
\end{abstract}
%%%%%%%%%%%%%%%%%%%%%%%%%%%%%%%%%%%%%%%%%%%%%%%%%%%%%%%%%%%%%%%%%%%%%%%
\maketitle
%%%%%%%%%%%%%%%%%%%%%%%%%%%%%%%%%%%%%%%%%%%%%%%%%%%%%%%%%%%%%%%%%%%%%%%
\section{Introduction}
\label{s:intro}
%%%%%%%%%%%%%%%%%%%%%%%%%%%%%%%%%%%%%%%%%%%%%%%%%%%%%%%%%%%%%%%%%%%%%%%
\noindent 
A subgroup $H$ of a group $G$ is {\it self-centralizing} if the centralizer $C_G(H)$ is contained in $H$. Let denote by $\C$ the class of groups in which all non-trivial subgroups are self-centralizing. It is easy to see that a finite group belongs to the class $\C$ if and only if it is either cyclic of prime order or non-abelian of order being the product $pq$ with $q<p$ primes and $p\equiv1\pmod q$. Infinite groups in $\C$ have trivial center; moreover they are periodic with each element of prime order. It easily follows that locally finite groups in $\C$ have to be finite. In \cite{Dea89, Nah93} finite groups with all elements of prime order are classified. It turns out that the primes occurring as orders of the elements of such a group are at most three. Since the class of groups with all elements of prime order is obviously quotient closed, one can conclude that  residually finite groups in the class $\C$ have finite exponent, and therefore they are finite by the Zel'manov  solution of the Restricted Burnside Problem \cite{Zel91, Zel91a}. From here, a standard argument shows that locally graded groups in the class $\C$ are finite as well. On the other hand, Tarski $p$-groups are examples of infinite groups in the class $\C$. Since groups of exponent $\leq3$ are nilpotent, for $p=2,3$ there exist no infinite $p$-groups in the class $\C$. Notice that an infinite group in the class $\C$ must contain a finitely generated infinite simple section.

Often there are also strong structural constraints on groups containing \lq\lq many\rq\rq \ self-centralizing subgroups. This in particular occurs in the case of finite $p$-groups. Shirong \cite{Shi01} proved that if $G$ is any finite nonabelian $p$-group with the property that 
$C_G(A)$ is either $A$ or $N_G(A)$ for every abelian subgroup $A$ of $G$, then the order of $G$ has to be $p^3$. On a similar note, Suzuki \cite[Proposition 1.1.8]{Ber08} showed that if a finite nonabelian $p$-group $G$ contains an abelian self-centralizing subgroup $A$ of order $p^2$, then $G$ is of maximal class. More recently, Hao and Jin \cite{Hao13}
classified all finite noncyclic $p$-groups which contain a self-centralizing cyclic normal subgroup.

In his book \cite{Ber08}, Berkovich posed the following problem:

\begin{problem}[Problem 9 of \cite{Ber08}]
Study the $p$-groups $G$ in which $C_G(A)=Z(A)$ for all non-abelian $A\le G$.
\end{problem}

The purpose of this paper is to study a relaxation of the above mentioned Berkovich's problem for not necessarily finite groups. We focus on the class $\X$ of groups $G$ with the property that if $H$ is a non-cyclic subgroup of $G$ then $C_G(H)\leq H$. Of course all cyclic groups are in the class $\X$. It is also evident that the class $\X$ is subgroup closed, and that the above defined class $\C$ is contained in the class $\X$.

Our main results are as follows. At first we show that the only abelian groups belonging to $\X$ are elementary abelian $p$-groups of order $p^2$ and Pr\"ufer $p$-group, where $p$ is a prime. Then we prove that a nilpotent group in $\X$ must be either abelian or a $p$-group of finite exponent. In the latter case we show that such a group has to be finite, and we classify all finite $p$-groups belonging to $\X$. It turns out that these are either of order $p^3$, where $p$ is odd, or $2$-groups of maximal class. 

We also deal with non-nilpotent groups in $\X$. We classify all finite nonabelian simple groups in $\X$. It turns out that these are precisely the groups $\PSL_2(q)$, where $q = 4$ or $q = 9$ or $q$ is a Fermat or a Mersenne prime.
The result does not depend on the classification of finite simple groups, since the proof only uses some classical results such as the Brauer-Suzuki theorem. 

Finally we
characterize the structure of supersolvable groups in $\X$. 
We show that these groups have to be either infinite dihedral or finite. In the finite case their structure is completely described. It mainly depends on the fact that the Sylow subgroup related to the largest prime dividing the order of the group is cyclic or non-cyclic.
%%%%%%%%%%%%%%%%%%%%%%%%%%%%%%%%%%%%%%%%%%%%%%%%%%%%%%%%%%%%%%%%%%%%%%%%
%\section{General remarks about the class $\X$}
%\label{s:general}
%%%%%%%%%%%%%%%%%%%%%%%%%%%%%%%%%%%%%%%%%%%%%%%%%%%%%%%%%%%%%%%%%%%%%%%%

%%%%%%%%%%%%%%%%%%%%%%%%%%%%%%%%%%%%%%%%%%%%%%%%%%%%%%%%%%%%%%%%%%%%%%%
\section{Nilpotent groups in the class $\X$}
\label{s:nilpotent}
%%%%%%%%%%%%%%%%%%%%%%%%%%%%%%%%%%%%%%%%%%%%%%%%%%%%%%%%%%%%%%%%%%%%%%%
\noindent 
At first we mention the following general result that will be repeatedly used throughout the paper.

\begin{lemma}
\label{l:family}
Let $\mathcal F$ be a family of finite groups, and suppose $\mathcal F$ is closed under taking subgroups. Then $\mathcal F \subseteq \X$ if and only if, for every group $G$ in $\mathcal F$, any non-cyclic subgroup of $G$ contains $Z(G)$.
\end{lemma}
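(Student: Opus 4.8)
The plan is to split the equivalence into its two implications, noting in advance that one of them is purely formal and that essentially all the content sits in the converse.

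For the implication ``$\mathcal F\subseteq\X\ \Rightarrow$ centre-containment'' I would take $G\in\mathcal F$ and a non-cyclic subgroup $H\le G$, and simply observe that $Z(G)\le C_G(H)\le H$, the last inclusion being exactly membership of $G$ in $\X$. This uses neither finiteness nor closure under subgroups, so nothing needs to be done here.

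For the converse I would argue pointwise. Assume every non-cyclic subgroup of every member of $\mathcal F$ contains the centre of that member, fix $G\in\mathcal F$ and a non-cyclic $H\le G$, and take an arbitrary $x\in C_G(H)$; the goal is to show $x\in H$. The key idea is to localize: set $L=\langle H,x\rangle$. Since $x$ centralizes $H$, the subgroups $H$ and $\langle x\rangle$ commute elementwise, so $L=H\langle x\rangle$ and in fact $x\in Z(L)$. Moreover $L$ is non-cyclic, because it contains the non-cyclic subgroup $H$, and $L\in\mathcal F$ since $\mathcal F$ is subgroup closed. Applying the assumed centre-containment property inside $L$ yields $Z(L)\le H$, whence $x\in H$. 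As $x\in C_G(H)$ was arbitrary, this gives $C_G(H)\le H$, that is, $G\in\X$.

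I do not expect a serious obstacle: the only step that warrants a word of justification is that $x$ is genuinely central in $L=\langle H,x\rangle$, not merely centralizing $H$, which is immediate once one writes a general element of $L$ as $hx^{n}$ with $h\in H$ and $n\in\Z$. It is also worth recording that the finiteness hypothesis on $\mathcal F$ plays no role in this argument; it is included only because in the applications $\mathcal F$ will always be a family of finite groups.
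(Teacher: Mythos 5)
Your proof is correct, and it takes a genuinely shorter route than the paper. The paper proves the nontrivial implication by induction on the group order: for $z\in C_G(H)$ it distinguishes whether $\langle H,z\rangle$ equals $G$ (in which case $z\in Z(G)\le H$ by hypothesis) or is a proper subgroup (in which case the inductive hypothesis puts $\langle H,z\rangle$ in $\X$, giving $z\in H$). You bypass the induction entirely by applying the centre-containment hypothesis directly to $L=\langle H,x\rangle$: since $\mathcal F$ is subgroup closed, $L\in\mathcal F$; since $x$ centralizes $H$ and itself, $x\in Z(L)$; and since $H$ is a non-cyclic subgroup of $L$, the hypothesis gives $Z(L)\le H$, hence $x\in H$. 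Both arguments share the same localization idea (pass from $G$ to $\langle H,x\rangle$), but yours makes explicit that neither the induction nor the finiteness of the members of $\mathcal F$ is needed, so your version of the lemma is in fact valid for any subgroup-closed family of (possibly infinite) groups; the paper's formulation restricts to finite groups only because that is all its applications require. The one step you flag --- that $x$ is central in $L$, not merely centralizing $H$ --- is indeed the only point needing a word, and your justification (every element of $L$ is of the form $hx^n$ with $h\in H$, $n\in\Z$, using that $\langle x\rangle$ centralizes $H$) is fine.
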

\begin{proof}
Let $\mathcal F_n$ be the family of groups of order $n$ in $\mathcal F$. We prove by induction on $n$ that $\mathcal F_n \subseteq \X$. Let $G$ be a group of order $n$ in $\mathcal F$ and $H$ a non-cyclic subgroup of $G$. Let $z \in C_G(H)$. If we have $\langle H, z \rangle = G$, then $z\in Z(G)$, so $z\in H$ by hypothesis. Now suppose that $\langle H, z \rangle$ is a proper subgroup of $G$. By induction, it belongs to $\X$, so we have $z \in H$. Hence $G$ is in $\X$.

The converse is obvious.
\end{proof}

Our next goal is to characterize abelian groups in the class $\X$.

\begin{theorem}
\label{t:abelian}
Let $G$ be an abelian group. Then $G$ is in the class $\X$ if and only if one of the following holds:
\begin{itemize}
\item$G$ is cyclic;
\item$G\simeq C_p\times C_p$, the direct product of two copies of the group of prime order $p$;
\item$G\simeq\Z_{p^\infty}$, the Pr\"ufer $p$-group.
\end{itemize}
\end{theorem}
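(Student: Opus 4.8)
The starting point is the observation that if $G$ is abelian then $C_G(H)=G$ for every subgroup $H$, so the condition defining $\X$ degenerates to the demand that every non-cyclic subgroup of $G$ be the whole of $G$. In other words, an abelian group lies in $\X$ precisely when it is cyclic or all of its proper subgroups are cyclic. Cyclic groups belong to $\X$ as already noted, and a glance at their subgroup lattices shows that $C_p\times C_p$ (proper subgroups of order $1$ or $p$) and $\Z_{p^\infty}$ (proper subgroups the finite $C_{p^n}$) do as well. Thus the whole content of the theorem is the converse: a non-cyclic abelian group in which every proper subgroup is cyclic is isomorphic to $C_p\times C_p$ or to $\Z_{p^\infty}$.

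To prove this I would argue by repeatedly exhibiting, under a contrary assumption, a concrete non-cyclic proper subgroup. Let $G$ be non-cyclic abelian with all proper subgroups cyclic. First, $G$ is a torsion group: if $x\in G$ had infinite order then, for any non-trivial $t$ of finite order, $\langle x^2\rangle\times\langle t\rangle$ would be a non-cyclic subgroup not containing $x$, hence a non-cyclic proper subgroup, which is impossible; and if $G$ were torsion-free it could not have torsion-free rank $\geq 2$, since two independent elements $x,y$ would give $\langle x^2,y\rangle\cong\Z^2$, again a non-cyclic subgroup missing $x$, so $G$ would embed in $\Q$ with $G\not\cong\Z$, whence either $G$ fails to be $p$-divisible for some prime $p$ and $pG$ is a non-cyclic proper subgroup isomorphic to $G$, or $G$ is divisible and $G\cong\Q$, which contains the non-cyclic proper subgroup $\Z[1/2]$. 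Next, $G$ is a $p$-group: otherwise, for a prime $p$ with $G_p\neq 0$, both $G_p$ and $\bigoplus_{r\neq p}G_r$ are proper (each because the other is non-trivial), hence cyclic, hence finite, so $G$ would be finite with all of its primary components cyclic, i.e. cyclic — a contradiction.

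It then remains to treat a non-cyclic abelian $p$-group $G$ all of whose proper subgroups are cyclic. If $G$ is finite it is non-cyclic, so it contains a copy of $C_p\times C_p$; if that copy is proper we contradict the hypothesis, hence $G\cong C_p\times C_p$. If $G$ is infinite, every finite subgroup is proper, hence cyclic, so $G$ contains no $C_p\times C_p$; equivalently the socle $G[p]$ has order $p$, and an abelian $p$-group with socle of order $p$ is cyclic or isomorphic to $\Z_{p^\infty}$, so $G\cong\Z_{p^\infty}$. I expect the main obstacle to be exactly this last step together with the rank-one torsion-free case: the finite non-cyclic abelian groups are disposed of at once by the structure theorem, but the infinite ones require some care — recognising that a non-cyclic subgroup of $\Q$ must fail to be $p$-divisible for some $p$ (or be all of $\Q$), and recalling that an abelian $p$-group with simple socle is cyclic or quasicyclic, a fact I would quote as standard or derive quickly from the description of bounded abelian $p$-groups as direct sums of cyclic groups.
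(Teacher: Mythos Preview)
Your argument is correct and follows essentially the same strategy as the paper: both reduce immediately to the observation that an abelian group lies in $\X$ if and only if every proper subgroup is cyclic, and then classify such groups by a case analysis. The paper's version is terser --- it splits directly into finite versus infinite and, in the infinite case, periodic versus torsion-free, disposing of the torsion-free rank-one case by citing the Beaumont--Zuckerman description of subgroups of $\Q$ --- whereas you organize the cases as torsion, then $p$-group, then finite versus infinite, and give a self-contained argument (via $pG$ and $\Z[1/2]$) in place of that citation; but the underlying ideas coincide.
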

\begin{proof} Let $G$ be an abelian group in the class $\X$. Then all proper subgroups of $G$ are cyclic. If $G$ is finite and non-cyclic it easily follows that $G\simeq C_p\times C_p$. Now let us assume that $G$ is infinite. Clearly $G$ has to be periodic or torsion-free. In the former case all subgroups of $G$ are finite, so $G\simeq\Z_{p^\infty}$. In the latter case $G$ is a subgroup of the additive group $\Q$ of all rational numbers, so it has to be cyclic (see for instance \cite{Bea51}). The converse is obvious.
\end{proof}

If $G$ is any non-abelian group in the class $\X$ then of course the center $Z(G)$ of $G$ has to be cyclic.

\begin{theorem}
\label{t:nilpotent}
Let $G$ be a nilpotent group in the class $\X$. Then $G$ is either abelian or a $p$-group of finite exponent.
\end{theorem}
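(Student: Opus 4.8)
The plan is to exploit Theorem~\ref{t:abelian} systematically. Since $\X$ is subgroup closed, every abelian subgroup of $G$ is cyclic, isomorphic to $C_p\times C_p$, or isomorphic to $\Z_{p^\infty}$; in particular $G$ has no abelian subgroup that is simultaneously torsion-free and non-cyclic, and (as already observed) $Z(G)$ is cyclic. We may assume $G$ is non-abelian, and the proof then splits into three claims: (i) $G$ is periodic; (ii) $G$ is a $p$-group; (iii) $G$ has finite exponent.

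Claim (i) is the heart of the matter, and the step I expect to be the main obstacle. Suppose some $x\in G$ has infinite order. First I would show $C_G(x)\neq\langle x\rangle$: otherwise, since $G$ is nilpotent and $\langle x\rangle$ is a proper subgroup, $\langle x\rangle$ is properly contained in $N:=N_G(\langle x\rangle)$, and $N/\langle x\rangle=N/C_G(x)$ embeds into $\Aut(\langle x\rangle)\cong C_2$, so there is $n\in N$ inverting $x$; a direct computation then gives $[x^m,n]=x^{-2m}$ for all $m$, whence $\gamma_i(N)\neq 1$ for every $i$, contradicting the nilpotency of the subgroup $N$. Thus there is $y\in C_G(x)\setminus\langle x\rangle$; the subgroup $\langle x,y\rangle$ is abelian and infinite, hence (by Theorem~\ref{t:abelian}) infinite cyclic, and it properly contains $\langle x\rangle$. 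Repeating this with a generator of $\langle x,y\rangle$ in place of $x$, one builds a strictly increasing chain $\langle x\rangle<\langle x_1\rangle<\langle x_2\rangle<\cdots$ of infinite cyclic subgroups of $G$; their union is an abelian, torsion-free, non-cyclic subgroup of $G$, which is impossible. Hence $G$ is periodic.

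For (ii), a periodic nilpotent group is the restricted direct product of its primary components, and distinct primary components centralize each other. As $G$ is non-abelian, some component $G_{p_0}$ is non-abelian, so it contains $a,b$ with $[a,b]\neq 1$, and $\langle a,b\rangle\le G_{p_0}$ is non-cyclic. If there were a prime $q\neq p_0$ with $G_q\neq 1$, then any $1\neq c\in G_q$ would centralize $\langle a,b\rangle$ and hence lie in it, contradicting $G_q\cap G_{p_0}=1$. So $G$ has a single primary component and is a $p$-group.

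For (iii), $Z(G)$ is a periodic cyclic group, hence finite cyclic or isomorphic to $\Z_{p^\infty}$. The latter is impossible: if $\Z_{p^\infty}\le G$ then, being non-cyclic, it equals its own centralizer, so it is a proper self-centralizing subgroup and, by nilpotency, is properly contained in its normalizer; an element $g$ normalizing it cannot centralize it, so it acts by a non-trivial automorphism, and divisibility of $\Z_{p^\infty}$ forces $[\Z_{p^\infty},g]=\Z_{p^\infty}$, whence $\gamma_i(G)\neq 1$ for all $i$, contradicting nilpotency. Therefore $Z(G)$ is finite, so it has finite exponent, and it remains to invoke the following standard fact: a nilpotent group whose centre has finite exponent has finite exponent. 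This is proved by induction on the nilpotency class, noting that for $x\in Z_2(G)$ the map $g\mapsto[g,x]$ is a homomorphism $G\to Z(G)$ with kernel $C_G(x)$, so $Z_2(G)/Z(G)$ embeds into $\operatorname{Hom}(G,Z(G))$ and therefore has exponent dividing that of $Z(G)$; then $G/Z(G)$ has finite exponent by the inductive hypothesis, and hence so does $G$.
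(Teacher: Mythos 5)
Your argument is correct, but it reaches the conclusion by a route genuinely different from the paper's. The paper argues via the centre: since $Z(G)$ is cyclic, either it is finite --- in which case $G$ has finite exponent by the cited result \cite[5.2.22]{Rob82} --- or it is infinite cyclic, in which case $G$ is torsion-free; the torsion-free case is then eliminated by showing every finitely generated subgroup is cyclic, using the fact that a finitely generated torsion-free nilpotent group has a central series with infinite cyclic factors. You instead prove periodicity head-on: for $x$ of infinite order, the normalizer condition together with $\Aut(\langle x\rangle)\cong C_2$ forces $C_G(x)>\langle x\rangle$ (an inverting element $n$ would make the lower central series of $\langle x,n\rangle$ never reach the identity), and Theorem \ref{t:abelian} then yields a strictly ascending chain of infinite cyclic subgroups whose union is a torsion-free, non-cyclic abelian subgroup --- impossible. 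This is more elementary and self-contained, avoiding the structure theory of finitely generated torsion-free nilpotent groups, at the cost of the chain construction. The reduction to a single prime via primary components is essentially identical in both proofs, and for finite exponent you reprove Robinson's 5.2.22 (centre of finite exponent implies finite exponent) by induction on the class via the embedding of $Z_2(G)/Z(G)$ into $\operatorname{Hom}(G,Z(G))$, where the paper simply cites it. One phrasing slip, not a gap: a periodic cyclic group is already finite, so once you know $Z(G)$ is cyclic the Pr\"ufer alternative cannot arise; your separate (correct) exclusion of Pr\"ufer subgroups via divisibility is redundant rather than necessary.
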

\begin{proof} Let $G$ be a nilpotent group in the class $\X$, and suppose that $G$ is not abelian. Thus $G$ is either periodic or torsion-free. For, if $Z(G)$ is finite then $G$ has finite exponent (see, for instance, \cite[5.2.22]{Rob82}). Otherwise $Z(G)$ is infinite cyclic, so $G$ is torsion-free.

Let $G$ be periodic. Then $G=Dr_pG_p$ is the direct product of its primary components $G_p$. Since $G$ is not abelian, the primary component $G_q$ is non-cyclic for at least one prime $q$. Thus $Dr_{p\not=q}G_p\leq C_G(G_q)\leq G_q$, therefore $G_q$ is the only non-trivial primary component of $G$.

Now assume that $G$ is torsion-free. If $H$ is a finitely generated subgroup of $G$, then $H$ has a central series $\{1\}=H_0<H_1<H_2<\dots<H_n=H$ with infinite cyclic factors (see, for instance, \cite[5.2.20]{Rob82}). Suppose $n>1$. Then $H_2$ is a finitely generated torsion-free nilpotent group in the class $\X$, and since $H_1\leq Z(H_2)$ and $H_2/H_1$ is cyclic, it follows that $H_2$ is abelian. This means that $H_2$ is cyclic, so $H_2=H_1$, a contradiction. Therefore $G$ is locally cyclic, so it is abelian.
\end{proof}
%%%%%%%%%%%%%%%%%%%%%%%%%%%%%%%%%%%%%%%%%%%%%%%%%%%%%%%%%%%%%%%%%%%%%%%%
%\section{$p$-groups in the class $\X$}
%\label{s:p}
%%%%%%%%%%%%%%%%%%%%%%%%%%%%%%%%%%%%%%%%%%%%%%%%%%%%%%%%%%%%%%%%%%%%%%%%

The next result provides a classification of all non-abelian $p$-groups belonging to the class $\X$.

\begin{theorem}
\label{t:finitep}
Let $G$ be a finite non-abelian $p$-group. Then $G$ belongs the class $\X$ if and only if either $p$ is odd and the order of $G$ is $p^3$, or $p=2$ and $G$ is isomorphic to one of the following groups:
\begin{itemize}
\item$D_{2^n}$, the dihedral group of order $2^n$;
\item$SD_{2^n}$, the semidihedral group of order $2^n$;
\item$Q_{2^n}$, the generalized quaternion group of order $2^n$.
\end{itemize}
\end{theorem}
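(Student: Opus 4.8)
The plan is to prove the two implications separately. For the reverse implication I would invoke Lemma~\ref{l:family} with $\mathcal F$ the (subgroup closed) family of all subgroups of the group in question, so that it suffices to verify that in each of the listed groups, and in each of their subgroups, every non-cyclic subgroup contains the centre. For the forward implication the starting remark is that, since $\X$ is subgroup closed and $G$ is finite, Theorem~\ref{t:abelian} forces every abelian subgroup of $G$ to be cyclic or isomorphic to $C_p\times C_p$; in particular every subgroup of $G$ isomorphic to $C_p\times C_p$ is self-centralizing, being abelian and non-cyclic.

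For the forward implication I would split into two cases according to whether $G$ has a subgroup isomorphic to $C_p\times C_p$. If it does not, then $G$ has a unique subgroup of order $p$, so $G$ is cyclic or generalized quaternion, and being non-abelian it is $G\cong Q_{2^n}$ for some $n$ (and $p=2$). If $G$ does contain a subgroup $A\cong C_p\times C_p$, then $A$ is an abelian self-centralizing subgroup of order $p^2$, so by Suzuki's result \cite[Proposition~1.1.8]{Ber08} $G$ is of maximal class. When $p=2$ the $2$-groups of maximal class are precisely $D_{2^n}$, $SD_{2^n}$ and $Q_{2^n}$; since $Q_{2^n}$ has a unique involution and therefore cannot contain $A$, we get $G\cong D_{2^n}$ or $G\cong SD_{2^n}$, completing the case $p=2$. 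When $p$ is odd the first case cannot occur (a non-cyclic $p$-group with a unique subgroup of order $p$ would be quaternion), so $G$ is of maximal class, and it remains to prove $|G|=p^3$.

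This last point I would prove by induction on $|G|$: assume $|G|=p^n$ with $n\ge 4$ and derive a contradiction, using that $G$, being $2$-generated, has $\Phi(G)=\gamma_2(G)$ of order $p^{n-2}$. If $\gamma_2(G)$ is non-abelian, it is a non-abelian $p$-group in $\X$ of smaller order, so by induction $|\gamma_2(G)|=p^3$ and $n=5$; then $\gamma_3(G)$ is a normal subgroup of order $p^2$ of the non-abelian group $\gamma_2(G)$ of order $p^3$, hence $\gamma_3(G)\cong C_p\times C_p$ — impossible, since then $G/\gamma_3(G)=G/C_G(\gamma_3(G))$ would embed into $\GL_2(p)$ while having order $p^3$ — or $\gamma_3(G)\cong C_{p^2}$, in which case $C_G(\gamma_3(G))$ has order $p^4$ and contains the cyclic group $\gamma_3(G)$ of order $p^2$ in its centre, so $C_G(\gamma_3(G))$ is either abelian, hence cyclic, giving $G$ a cyclic subgroup of index $p$ and hence $G\cong M_{p^5}$, which has class $2$ not $4$; or non-abelian of order $p^4$, against the inductive hypothesis. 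If $\gamma_2(G)$ is abelian, then it is cyclic — it lies in $\X$, and it is not isomorphic to $C_p\times C_p$ by the same $\GL_2(p)$ argument applied to $C_G(\gamma_2(G))=\gamma_2(G)$; now if $C_G(\gamma_2(G))=\gamma_2(G)$ then $G/\gamma_2(G)=G/\Phi(G)\cong C_p\times C_p$ would embed into the cyclic group $\Aut(\gamma_2(G))$, a contradiction; and if $C_G(\gamma_2(G))$ properly contains $\gamma_2(G)$, then for any $x\in C_G(\gamma_2(G))\setminus\gamma_2(G)$ the subgroup $\langle x\rangle\gamma_2(G)=\langle x,\Phi(G)\rangle$ is an abelian maximal subgroup of $G$, necessarily cyclic, so again $G$ has a cyclic subgroup of index $p$ and $G\cong M_{p^n}$ with class $2<n-1$, a contradiction.

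For the reverse implication, checking Lemma~\ref{l:family} is routine: when $p$ is odd and $|G|=p^3$ one has $Z(G)=\Phi(G)$ of order $p$, and every subgroup of order $p^2$ is maximal hence contains $\Phi(G)$, while subgroups of order $p$ are cyclic; for $Q_{2^n}$ the centre is generated by the unique involution, which therefore lies in every non-cyclic subgroup; and for $D_{2^n}$ and $SD_{2^n}$ one reads off from the subgroup lattice that a non-cyclic subgroup is dihedral, semidihedral or generalized quaternion, and in each case contains the central involution. The main obstacle is the inductive step for odd $p$: although the skeleton above is short, making it rigorous requires assembling several structural facts about $p$-groups of maximal class (that they are $2$-generated, the orders and normality of the lower central terms, and the behaviour of the centralizers $C_G(\gamma_i(G))$), together with the classification of $p$-groups having a cyclic subgroup of index $p$ and the fact that for odd $p$ none of order greater than $p^3$ is of maximal class; one must also take care that the centralizers produced in the argument are genuinely proper subgroups, so that the induction applies to them.
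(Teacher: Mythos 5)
Your proposal is correct, but for the crucial odd-$p$ step it takes a genuinely different route from the paper. The initial reduction is essentially the same in both: you split according to whether $G$ contains a subgroup $C_p\times C_p$ (no such subgroup gives a unique subgroup of order $p$, hence $G\cong Q_{2^n}$ --- the paper reaches the same conclusion via Cartan--Eilenberg's theorem on $p$-groups all of whose abelian subgroups are cyclic), and otherwise you invoke Suzuki's result quoted in the introduction to conclude that $G$ has maximal class, where the paper invokes Blackburn's theorem to the same effect; for $p=2$ both arguments then quote the classification of maximal class $2$-groups. The divergence is in proving $|G|=p^3$ for odd $p$: the paper uses the fine structure of maximal class $p$-groups from Leedham-Green--McKay (the subgroup $P_{n-2}=\langle s_{n-2},s_{n-1}\rangle$ has exponent $p$, hence is non-cyclic, and is centralized by an element outside it), whereas you run an induction on $|G|$, analysing $C_G(\gamma_2(G))$ and $C_G(\gamma_3(G))$ via Theorem~\ref{t:abelian}, the bound $p$ on the order of a $p$-subgroup of $\GL_2(p)$ and the cyclicity of $\Aut(C_{p^k})$ for odd $p$, together with the classification of non-abelian $p$-groups with a cyclic maximal subgroup (only the modular group $M_{p^n}$, of class $2$, for odd $p$). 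Your route trades the exponent-$p$ result for maximal class groups for the cyclic-maximal-subgroup classification and a longer case analysis; the paper's argument is shorter once the Leedham-Green--McKay machinery is granted, while yours is more self-contained. The steps check out; the one place to add a line is the assertion $|C_G(\gamma_3(G))|=p^4$ in the case $\gamma_3(G)\cong C_{p^2}$, which needs both that $G/C_G(\gamma_3(G))$ embeds in $\Aut(C_{p^2})$ (so the index is at most $p$) and that $\gamma_3(G)$ is not central (e.g.\ because $|Z(G)|=p$ in a group of maximal class). The backward direction coincides with the paper's verification via Lemma~\ref{l:family}.
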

\begin{proof} 
Let $G$ be a finite non-abelian $p$-group in $\X$. Suppose first that every abelian subgroup of $G$ is cyclic. It follows from \cite[Theorem 11.6]{Car99} that the group $G$ must then be isomorphic to the generalized quaternion group $Q_{2^n}$. Assume now that $G$ contains a noncyclic abelian subgroup. By Theorem \ref{t:abelian}, the latter must be of order $p^2$. Invoking \cite[Theorem 2]{Bla75}, the group $G$ is either of order $p^3$ or of maximal nilpotency class. When $p=2$, both cases reduce to $G$ being isomorphic to the dihedral, semidihedral or generalized quaternion group \cite[Corollary 3.3.4]{Lee02}. Now assume $p$ is odd and $|G| = p^n$ with $n > 3$. The group $G$ possesses a chief series of subgroups $G = P_0 > P_1 > P_2 > \cdots > P_{n-1} > P_n = 1$. The details concerning the construction may be found in \cite[Chapter 3]{Lee02}. Pick any $s \in G \backslash P_1$ and $s_1 \in P_1 \backslash P_2$, and define $s_i = [s_{i-1}, s]$ for $i \geq 2$. Consider the subgroup $P_{n-2} = \langle s_{n-2}, s_{n-1} \rangle$. As we have $[s_{n-2},s_1] = [s_{n-1},s_1] = 1$, the element $s$ centralizes $P_{n-2}$. Since the exponent of the subgroup $P_{n-2}$ equals $p$ by \cite[Proposition 3.3.2 and Corollary 3.3.4]{Lee02}, it is not cyclic. This is a contradiction with the fact that $G$ belongs to $\X$.

We now show that the admissable groups from above are indeed in the class $\X$.
Let $\mathcal F_3$ be the family consisting, for each prime $p$, of the groups of order $p^3$ and their subgroups. Let $G$ be any group in $\mathcal F_3$ and $H$ its non-cyclic subgroup. Hence $G$ is of order $p^3$ and $H$ is an abelian group of order $p^2$. Then the subgroup $H Z(G)$ of $G$ is abelian, so we have $Z(G) \leq H$. By Lemma \ref{l:family}, the family $\mathcal F_3$ is contained in $\X$.
Now let $\mathcal F_2$ be the family consisting of $2$-groups listed in the statement of the theorem, together with their subgroups. Note that each of the dihedral, semidihedral and generalized quaternion groups has exactly three maximal subgroups, all of which are either cyclic or one of the $2$-groups of maximal class. Let $G$ be any group in $\mathcal F_2$ and $H$ its non-cyclic subgroup. Recall that $G$ may be generated by two elements $x$ and $y$, and that the center $Z(G)$ is generated by the maximal non-trivial power of one of them, say $y$. Should no power of $y$ be contained in $H$, we would have $H \subseteq \{ 1, x, x y^i \}$ for some $i$, which is impossible due to $H$ not being cyclic. The subgroup $H$ thus contains a non-trivial power of $y$ and hence the center $Z(G)$. Invoking Lemma \ref{l:family}, we have $\mathcal F_2 \subseteq \X$.
\end{proof}

Now we prove that every periodic non-abelian nilpotent group in the class $\X$ is finite.

\begin{theorem}
\label{t:infinitep}
Every non-abelian nilpotent $p$-group in the class $\X$ is finite.
\end{theorem}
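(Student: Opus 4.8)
The plan is to suppose, for contradiction, that $G$ is an infinite non-abelian nilpotent $p$-group in $\X$, and to derive a contradiction by producing inside $G$ a non-cyclic subgroup with a centralizer not contained in it. By Theorem \ref{t:nilpotent} we already know $G$ has finite exponent, say $p^e$; the task is to bound $|G|$. First I would record the basic local structure: every finitely generated subgroup of $G$ is a finite nilpotent $p$-group lying in $\X$, hence by Theorem \ref{t:finitep} is cyclic, or of order $p^3$ with $p$ odd, or one of $D_{2^n}$, $SD_{2^n}$, $Q_{2^n}$. In particular every finitely generated subgroup of $G$ is generated by at most $2$ elements, and in the $2$-group case the finite subgroups have unboundedly large order only through the maximal-class families.

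The key step is to analyse the center and an abelian subgroup of rank $2$. Since $G$ is non-abelian and in $\X$, $Z(G)$ is cyclic, hence finite (a $p$-group of finite exponent that is locally cyclic is finite), so $|Z(G)| = p^c$ for some $c \geq 1$. Now I would locate a non-cyclic abelian subgroup $A \leq G$: because $G$ is a non-abelian nilpotent group, it contains a subgroup $B$ with $B' \neq 1$ minimal such, and then $B$ has a non-cyclic abelian subgroup; alternatively, if every abelian subgroup of $G$ were cyclic then every finitely generated subgroup would be cyclic or generalized quaternion, forcing $G$ to be locally quaternion, and one checks directly that an infinite locally-$Q_{2^n}$ group has a non-cyclic (indeed Klein four, via $\langle -1\rangle \times \langle \text{central involution}\rangle$ — but these coincide) — more carefully, the union of an ascending chain $Q_8 < Q_{16} < \cdots$ contains the Prüfer $2$-group $\Z_{2^\infty}$, and any element inverting it centralizes nothing outside, yet an infinite $2$-group has more than one involution's worth of structure; this case is handled by noting $C_2 \times C_2$ must then appear, contradicting "all abelian subgroups cyclic." So we may assume $A \simeq C_p \times C_p$ by Theorem \ref{t:abelian}.

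Having $A \simeq C_p \times C_p$, the plan is to apply Suzuki's result cited in the introduction: since $A$ is self-centralizing (it is non-cyclic, so $C_G(A) \leq A$, and $A$ abelian gives $C_G(A) = A$) of order $p^2$, every finite subgroup of $G$ containing $A$ is of maximal class, hence for $p$ odd has order at most $p^3$ and for $p=2$ is dihedral, semidihedral or quaternion. For $p$ odd this immediately caps the order of every finite subgroup containing $A$ at $p^3$; writing $G = \bigcup_i G_i$ as an ascending union of finitely generated (finite) subgroups each containing $A$, we get $|G_i| \leq p^3$ for all $i$, so $|G| \leq p^3$ and $G$ is finite, contradiction. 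For $p = 2$ the finite subgroups containing $A$ form an ascending chain of $2$-groups of maximal class; their union $G$ would then be an infinite locally-maximal-class $2$-group, and I would rule this out by observing that in $D_{2^n}$, $SD_{2^n}$, $Q_{2^n}$ the subgroup $A \simeq C_2 \times C_2$ (which exists only in $D_{2^n}$ and $SD_{2^n}$) is \emph{not} contained in the unique maximal cyclic subgroup, so in the union $G$ the centralizer $C_G(A)$ would have to absorb the whole ascending cyclic part — but that part is an infinite locally cyclic $2$-group, i.e. $\Z_{2^\infty}$ or infinite cyclic, which is non-cyclic-free yet $A$ does not lie in it, so $C_G(A) \not\leq A$, contradicting $G \in \X$.

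The main obstacle will be the $p = 2$ endgame: organising the direct-limit argument so that one genuinely finds a non-cyclic subgroup $H$ with $C_G(H) \not\leq H$, rather than merely asserting "an infinite locally-dihedral group is bad." Concretely, the cleanest route is probably: show the maximal-class hypothesis forces $G$ to have a normal subgroup $N$ which is locally cyclic and infinite (the union of the cyclic maximal subgroups of the $G_i$), hence $N \simeq \Z_{2^\infty}$ or $N$ infinite cyclic; then $|G : N| = 2$ and $G$ acts on $N$ by inversion on an appropriate piece; now pick an involution $t \in G \setminus N$ (exists since $G$ is periodic of finite exponent) and the subgroup $H = \langle t \rangle \times \langle z \rangle$ where $z$ is the involution in $N$ — but in a quaternion-type limit there is no such $t$, and in the dihedral/semidihedral limit $\langle t, z\rangle \simeq C_2 \times C_2$ while $C_G(\langle t,z\rangle)$ is finite whereas... one must instead use that $C_N(t)$ is finite but $C_G$ of a cleverly chosen non-cyclic $H$ meets $N$ in an infinite set. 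I expect the slick finish is: $G$ has finite exponent $2^e$, so $G$ is finite by the (already invoked in the introduction) positive solution of the Restricted Burnside Problem once we know $G$ is finitely generated — and $G$ \emph{is} $2$-generated since every finitely generated subgroup is, and a nilpotent group all of whose finitely generated subgroups are $d$-generated is itself $d$-generated — hence finite, the desired contradiction. This last observation in fact uniformly closes both parities and is the argument I would write up.
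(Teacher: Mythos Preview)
Your odd-prime case is essentially the paper's argument: once you know from Theorem~\ref{t:finitep} that every finite non-cyclic subgroup has order $p^3$, adjoining any further element to a fixed non-cyclic $H$ still gives a group of order $p^3$, so $G=H$ is finite. The detour through Suzuki's result and ``maximal class'' is unnecessary and slightly misleading (maximal class alone does \emph{not} bound the order for odd $p$; the bound comes directly from Theorem~\ref{t:finitep}).

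The $p=2$ case, however, has a genuine gap. Your closing claim --- that a nilpotent group all of whose finitely generated subgroups are $d$-generated is itself $d$-generated --- is false: the Pr\"ufer group $\Z_{p^\infty}$ is abelian, every finitely generated subgroup is cyclic, yet the group is not finitely generated. So you cannot deduce that $G$ is $2$-generated this way, and the appeal to the Restricted Burnside Problem never gets off the ground. (Your earlier attempts in that paragraph to produce an explicit non-cyclic $H$ with $C_G(H)\not\le H$ are, as you yourself note, not carried through.)

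The clean finish you are missing is precisely the one hypothesis you have not exploited: $G$ is \emph{nilpotent}, hence of some fixed class $c$. Each finite non-cyclic subgroup is, by Theorem~\ref{t:finitep}, a $2$-group of maximal class, so a subgroup of order $2^n$ has nilpotency class $n-1\le c$, forcing $n\le c+1$. Thus every finitely generated subgroup has order at most $2^{c+1}$, and adjoining one more element to a subgroup of this maximal order cannot enlarge it; hence $G$ is finite. This is exactly the paper's argument: start with a non-cyclic finite $H\le G$, and observe that if $G$ were infinite one could build an ascending chain $H=H_0<H_1<\cdots$ of finite subgroups of strictly increasing (maximal) nilpotency class, contradicting the global bound $c$.
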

\begin{proof}
Let $G$ be a non-abelian nilpotent $p$-group in the class $\X$. Since $G$ is not abelian, $G$ has a finitely generated subgroup $H$ which is not cyclic. Of course $H$ is finite.

First assume that $p$ is an odd prime. Then by Theorem \ref{t:finitep} we know that the order of $H$ is $p^3$. For every element $x\in G$, the subgroup $\langle H,x\rangle$ is finite, so it has order $p^3$ too. Therefore $G$ is finite.

Now suppose that $p=2$, and assume for a contradiction that $G$ is infinite. By Theorem \ref{t:finitep} there exists a positive integer $n_1>2$ such that $H$ has order $2^{n_1}$ and nilpotency class $n_1-1$. Since $H\not=G$ there exists an element $x_1\in G$ such that $x_1\notin H$. Then the subgroup $\langle H,x_1\rangle$ has order $2^{n_2}$ and hence nilpotency class $n_2$ for some integer $n_2>{n_1}$. By iterating this argument we can construct a chain of finite subgroups of $G$ having increasing nilpotency class, a contradiction.
\end{proof}

%%%%%%%%%%%%%%%%%%%%%%%%%%%%%%%%%%%%%%%%%%%%%%%%%%%%%%%%%%%%%%%%%%%%%%%
\section{Non-nilpotent groups in the class $\X$}
\label{s:simple}
%%%%%%%%%%%%%%%%%%%%%%%%%%%%%%%%%%%%%%%%%%%%%%%%%%%%%%%%%%%%%%%%%%%%%%%
\noindent
Our first three results of this section completely describe the structure of supersoluble groups in $\X$.

\begin{theorem}
\label{t:infsupersoluble}
Let $G$ be a non-cyclic supersoluble group in the class $\X$. Then $G$ is either finite or isomorphic to the infinite dihedral group.
\end{theorem}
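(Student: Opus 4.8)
The plan is to reduce everything to an analysis of the Fitting subgroup of $G$, which the earlier theorems control very tightly. First I would note that a supersoluble group is polycyclic, so $G$ satisfies the maximal condition (in particular every subgroup is finitely generated) and admits a well-defined Fitting subgroup $F := \Fitt(G)$, a nilpotent normal subgroup satisfying $C_G(F) \le F$ --- a standard property of polycyclic groups, valid more generally whenever $G$ satisfies the maximal condition on normal subgroups (via the usual argument: a normal subgroup of $G$ not contained in $F$ yet centralizing $F$ would, together with $F$, generate a larger normal nilpotent subgroup). Being a subgroup of $G$, $F$ itself lies in $\X$, is nilpotent, and is finitely generated; hence Theorem \ref{t:nilpotent} applies and $F$ is either abelian or a $p$-group of finite exponent. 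In the latter case $F$ is a finitely generated nilpotent torsion group, so it is finite (alternatively, Theorem \ref{t:infinitep} applies directly); in the former case Theorem \ref{t:abelian}, combined with finite generation, forces $F$ to be cyclic or isomorphic to $C_p \times C_p$. Summarising: $F$ is either finite or infinite cyclic.

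Assume now that $G$ is infinite (otherwise there is nothing to prove). If $F$ were finite, then $\Aut(F)$ would be finite; since the conjugation action of $G$ on $F$ has kernel $C_G(F) \le F$, the group $G/C_G(F)$ embeds in $\Aut(F)$ and is therefore finite, and as $C_G(F)$ is also finite this would make $G$ finite, a contradiction. Hence $F = \langle a \rangle$ is infinite cyclic. Since $F$ is abelian we get $C_G(F) = F$, so $G/F$ embeds in $\Aut(\Z) \cong C_2$; as $G$ is non-cyclic we cannot have $G = F$, so $[G:F] = 2$.

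To finish, pick $b \in G \setminus F$. The conjugation action of $b$ on $F$ is a non-trivial automorphism of $\Z$ (because $b \notin C_G(F) = F$), hence inversion: $b a b^{-1} = a^{-1}$. Writing $b^2 = a^k \in F$ and conjugating by $b$ yields $a^{-k} = a^k$, so $a^{2k} = 1$ and therefore $b^2 = 1$. Thus $G = \langle a \rangle \rtimes \langle b \rangle$ with $b$ acting by inversion, i.e.\ $G$ is the infinite dihedral group, as claimed. I do not expect a serious obstacle here: the only ingredient drawn from outside the paper is the self-centralizing property of the Fitting subgroup of a polycyclic group, and once that together with Theorems \ref{t:abelian}, \ref{t:nilpotent} and \ref{t:infinitep} are in hand, the remainder is a short direct computation.
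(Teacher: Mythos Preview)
Your proof is correct and follows essentially the same route as the paper's: reduce to the Fitting subgroup, use Theorems \ref{t:abelian}, \ref{t:nilpotent}, \ref{t:infinitep} together with $C_G(F)\le F$ to see that $F$ is finite or infinite cyclic, dispose of the finite case via $|G|\le |C_G(F)|\cdot|\Aut(F)|$, and in the infinite cyclic case deduce $[G:F]=2$ with $b$ acting by inversion and $b^2=1$. Your write-up is in fact a touch more explicit than the paper's (you spell out why $C_G(F)\le F$ and why $G\ne F$), but there is no substantive difference in strategy.
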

\begin{proof} Let $F$ denote the Fitting subgroup of $G$. Then $F$ is nilpotent, since it is finitely generated. So $F$ is finite or abelian, by Theorems \ref{t:nilpotent} and  \ref{t:infinitep}. If $F$ is finite then $G$ is also finite, since $C_G(F)$ and $G/C_G(F)$ are both finite. Now assume $F$ infinite abelian. As $F$ is finitely generated, it is cyclic by Theorem \ref{t:abelian}. Thus $G/F=G/C_G(F)$ has order 2. Write $F=\langle a\rangle$ and $G/F=\langle bF\rangle$. Clearly $G$ is not abelian by Theorem \ref{t:abelian}, so we get $G=\langle b\rangle\ltimes\langle a\rangle$ with $a^b=a^{-1}$. Moreover $b^2\in\langle a\rangle$ and thus $b^2=1$, since every non-trivial element of $\langle a\rangle$ is mapped in its inverse. Therefore $G$ is isomorphic to the infinite dihedral group.
\end{proof}

In next two statements we will assume that $G$ is a finite non-nilpotent supersoluble group and that $p>2$ is the largest prime dividing  the order of $G$. Moreover $P$ will denote a Sylow $p$-subgroup of $G$. Thus it is well-known that $P$ is normal in $G$ (see, for instance, \cite[Theorem 5.4.8]{Rob82}). Therefore $G=X\ltimes P$, where $X$ is a $p'$-group. Finally, $P_1$ will denote a minimal normal subgroup of $G$ contained in $P$. Then $|P_1| = p$, and $G/C_G(P_1)$ is cyclic of order dividing $p-1$. Of course $P_1\leq Z(P)$. We deal separately with the cases when $P$ is non-cyclic (Theorem \ref{t:ncsupersoluble}) or cyclic (Theorem \ref{t:csupersoluble}).

\begin{theorem}
\label{t:ncsupersoluble}
Assume that $P$ is non-cyclic. Then $G$ belongs to the class $\X$ if and only if $G = X \ltimes P$, where $X$ is cyclic of order dividing $p-1$ and it acts fixed point freely on $P$.
\end{theorem}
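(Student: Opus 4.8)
The plan is to prove both implications. For the "if" direction, I would set $G = X \ltimes P$ with $X$ cyclic of order dividing $p-1$ acting fixed point freely on $P$, and verify that every non-cyclic subgroup $H$ of $G$ contains its centralizer. The key observation is that fixed point free action forces $C_G(P) = P$, so any element of $G \setminus P$ has centralizer contained in... wait, I need to be careful: I would first note that since $X$ acts fixed point freely, $P$ is non-cyclic abelian (a group admitting a fixed-point-free automorphism of prime order is abelian, and here combined with Theorem~\ref{t:abelian} applied inside $P$ — actually $P$ need not be in $\X$ a priori, but we can argue directly). A cleaner route: take $H$ non-cyclic. If $H \leq P$, then since $P$ is abelian, $C_G(H) \supseteq C_P(H) = P$, so I must show $C_G(H) \leq H$, i.e. that $H \geq P$ and no element outside $P$ centralizes $H$; this uses that a non-trivial element $x \in X$ (or a conjugate) fixes no non-trivial element of $P$, hence centralizes no non-trivial subgroup of $P$. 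For $H \not\leq P$, pick $h = x u$ with $x \in X \setminus\{1\}$, $u \in P$; its centralizer is small because $C_P(h)$ is trivial (fixed point freeness after conjugating $x$ into $X$), so $C_G(H) \leq C_G(h)$ is a $p'$-group meeting $P$ trivially, and one checks it lies in $H$ using that $H$ being non-cyclic must then contain a non-trivial part of $P$, forcing $H \cap P \neq 1$ and then $C_G(H) \leq C_G(H\cap P) \cdot(\text{something})$. I would organize this by cases on $H \cap P$.

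For the "only if" direction, assume $G = X \ltimes P \in \X$ with $P$ non-cyclic. First I would show $X$ is cyclic: since $P_1 \leq Z(P)$ has order $p$ and $P$ is non-cyclic, pick any $z \in Z(P)$ of order $p$ with $\langle z\rangle \neq P_1$ or more simply use that $P$ contains a non-cyclic abelian subgroup $A$ (by Theorem~\ref{t:abelian} applied to a subgroup of $P$, $A \simeq C_p \times C_p$), and then $C_G(A) \leq A$. Now $C_P(A) \geq A$ and in fact I would argue $P$ itself is abelian of the form $C_p \times C_p$: the subgroup $A = C_p \times C_p$ is self-centralizing in $G$, so $C_P(A) = A$, which by a maximal-class/Blackburn-type argument (or directly, since $P$ is a $p$-group with a self-centralizing subgroup of order $p^2$, $P$ has maximal class, but maximal class $p$-groups for odd $p$ of order $> p^3$ are not in $\X$ by Theorem~\ref{t:finitep}, and order $p^3$ of maximal class with abelian self-centralizing subgroup... ) — here it is cleanest to invoke that $P \in \X$ (subgroup-closed) is a non-abelian $p$-group only if it has order $p^3$, $p$ odd, but such a group has cyclic center of order $p$ whereas we'd need to re-examine; alternatively $P$ abelian in $\X$ non-cyclic means $P \simeq C_p \times C_p$ by Theorem~\ref{t:abelian}. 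I would settle on: since $P$ is non-cyclic and lies in $\X$, and a non-abelian $p$-group in $\X$ of order $p^3$ still contains $C_p \times C_p$ with trivial... no. The honest statement: $P \in \X$, non-cyclic; if $P$ abelian then $P = C_p\times C_p$; if $P$ non-abelian then $|P| = p^3$. I will handle both, but expect the intended hypothesis forces $P = C_p \times C_p$ via the fixed-point-free conclusion.

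Granting $A = C_p \times C_p \leq P$ with $C_G(A) \leq A$: then $G/C_G(A)$ embeds in $\Aut(A) = \GL_2(p)$, but more usefully $C_G(A) = A$ gives $A = C_G(A) \trianglelefteq N_G(A)$. To get $X$ cyclic, restrict the conjugation action of $X$ on $P_1 = \langle a_1 \rangle$: $X/C_X(P_1)$ is cyclic of order dividing $p-1$. I must rule out $C_X(P_1) \neq 1$. If $1 \neq x \in C_X(P_1)$, then $\langle x, a_1\rangle$ is abelian non-cyclic (orders coprime), hence self-centralizing; but any $a \in A \setminus P_1$ would need to be centralized by... this isn't immediate. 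Instead: the subgroup $\langle a_1 \rangle \times \langle x \rangle$ being self-centralizing forces $C_P(a_1) = \langle a_1\rangle$, i.e. $a_1$ is not centralized in $P$ by anything outside $\langle a_1\rangle$ — contradicting $P_1 \leq Z(P)$ and $P$ non-cyclic (as then $|Z(P)| \geq p^2$ or $P = C_p \times C_p$ gives $C_P(a_1) = P \neq \langle a_1\rangle$). Hence $C_X(P_1) = 1$, so $X$ is cyclic of order dividing $p-1$, and moreover $X$ acts faithfully on $P$; then the same self-centralizing argument applied to $\langle a \rangle\langle x\rangle$ for each $1\neq x\in X$ and suitable $a\in P$ forces no non-trivial element of $X$ to fix any non-trivial element of $P$, i.e. the action is fixed point free.

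The main obstacle I anticipate is pinning down the structure of $P$: showing cleanly that $P$ non-cyclic in $\X$ together with the ambient self-centralizing conditions forces $P$ to be elementary abelian of rank $2$ (equivalently, that the intended hypothesis is consistent), and then running the "no non-trivial fixed points" argument uniformly. Once $P = C_p \times C_p$ is established, both directions become bookkeeping with the semidirect product structure and Lemma~\ref{l:family}.
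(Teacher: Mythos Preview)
Your ``only if'' argument contains a genuine error at the step where you claim $C_X(P_1)=1$. From the self-centralizing property of $\langle a_1\rangle\times\langle x\rangle$ you deduce ``$C_P(a_1)=\langle a_1\rangle$'', but this does not follow: what the self-centralizing condition actually gives is $C_P(a_1)\cap C_P(x)\le\langle a_1,x\rangle\cap P=\langle a_1\rangle$, and since $a_1\in Z(P)$ this only says $C_P(x)=P_1$, which is no contradiction at all---it is perfectly consistent with $x$ being a nontrivial $p'$-element centralizing $P_1$. The paper's proof proceeds differently here: given a $p'$-element $x\in C_G(P_1)$, it invokes Maschke's theorem to produce an $\langle x\rangle$-invariant complement $S$ to $P_1$ inside $P$ (when $|P|=p^2$) or inside $P/P_1$ (when $|P|=p^3$); then $S\langle x\rangle$ is non-cyclic and must contain $P_1$ by the $\X$ property, forcing $P\le S\langle x\rangle$, a contradiction. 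This Maschke decomposition is the key idea you are missing, and your approach does not supply a substitute for it.

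Two further remarks. First, your expectation that the conclusion forces $P\simeq C_p\times C_p$ is mistaken: for $p\ge 5$ the extraspecial group of order $p^3$ and exponent $p$ admits fixed-point-free automorphisms of order dividing $p-1$ (e.g.\ send each of the two generators to its $\lambda$-th power for $\lambda\in\GF(p)^\times$ of order exceeding $2$), so both possibilities for $P$ genuinely occur and must be treated, as the paper does. Second, in your ``if'' sketch, the claim that ``a group admitting a fixed-point-free automorphism of prime order is abelian'' is false (Thompson's theorem gives only nilpotent); the paper simply invokes Lemma~\ref{l:family} for that direction.
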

\begin{proof} Let $G$ belong to the class $\X$. Since $P$ is non-cyclic we have $C_G(P) \leq P$. As $p>2$, by Theorems \ref{t:abelian} and \ref{t:finitep} we know that $P$ is either elementary abelian of order $p^2$ or non-abelian of order $p^3$. 

Our first aim is to show that $C_G(P_1) = P$. Of course it suffices to prove that $C_G(P_1) \leq P$. If not, let $x \in C_G(P_1)$ be a $p^{\prime}$-element of $G$. First suppose $|P| = p^2$. By Maschke theorem (see, for instance, \cite[Corollary 10.17]{Isa08}) we have $P = P_1 \times S$, where $S$ is $\langle x \rangle$-invariant. Since $C_G(P) \leq P$ we get $x \notin C_G(S)$. Thus $S \langle x \rangle$ is not cyclic, so $P_1 \leq C_G(S \langle x \rangle) \leq S \langle x \rangle$ and $P\leq S \langle x \rangle$, a contradiction. Now suppose $|P| = p^3$ and $P$ non-abelian. Then $P/P_1$ is a non-cyclic normal subgroup of $G/P_1$. Let $S_1/P_1$ be a minimal normal subgroup of $G/P_1$ contained in $P/P_1$. Then $P/P_1 = S_1/P_1 \times S_2/P_1$ where $S_2/P_1$ is $\langle x \rangle$-invariant, by Maschke theorem. If both $S_1$ and $S_2$ are cyclic, then $x$ acts trivially on  every element of $S_1$ and $S_2$ having order $p$, so it acts trivially on $S_1$ and $S_2$ by \cite[Corollary 4.35]{Isa08}. Thus $x \in C_G(P) \leq P$, a contradiction. Hence we may assume that there exists $S\in\{S_1,S_2\}$ which is non-cyclic. Then, arguing as in the case $|P| = p^2$, we get $S= P_1 \times Q_1$ where $Q_1$ is $\langle x \rangle$-invariant. It follows that $x \in C_G(Q_1)$, so $x \in C_G(S) \leq S$, again a contradiction. Therefore $C_G(P_1) \leq P$ and obviously $C_G(P_1) = P$.

Hence $X=G/P$ is cyclic of order dividing $p-1$. Write $X= \langle x \rangle$. Suppose that a non-trivial power of $x$, say $x^r$, centralizes a non-trivial element $a$ of $P$. Then $P_1 \langle x^r \rangle$ is not cyclic since $C_G(P_1) \leq P$, and  
$a \in C_G(P_1\langle x^r \rangle) \leq P_1 \langle x^r \rangle $. But then $a \in P \cap P_1 \langle x^r \rangle = P_1$, a contradiction. Therefore $X$ acts fixed point freely on $P$, as required.

Conversely, using Lemma \ref{l:family} it is straightforward to show that a group $G$ with the above structure belongs to the class $\X$.
\end{proof}

\begin{example}
\label{e:ncsupersoluble}
The group
$$\left\langle a,b,c\,|\, a^2 = b^3 = c^3 = 1, [b, a] = b^2, [c, a] = c^2, [c, b] = 1\right\rangle$$
is the smallest group with the structure mentioned in Theorem \ref{t:ncsupersoluble}.  It is a split extension of the group $C_3 \times C_3$ by $C_2$. Analogously, the group
$$\left\langle a,b,c\,|\, a^3 = b^7 = c^7 = 1, [b, a] = b, [c, a] = c, [c, b] = 1\right\rangle,$$
which is a split extension of the group $C_7 \times C_7$ by $C_3$, is the smallest group of odd order having the structure of Theorem \ref{t:ncsupersoluble}.
\end{example}

\begin{theorem}
\label{t:csupersoluble}
Assume that $P$ is cyclic. Then $G$ belongs to the class $\X$ if and only if one of the following holds:
\begin{itemize}
\item[$(i)$] $G = D \ltimes C$, where $C$ is cyclic, $D$ is cyclic and $D$ acts fixed point freely on $C$;
\item[$(ii)$] $G = D \ltimes C$, where $C$ is a cyclic group of odd order, $D$ is a generalized quaternion group and $Z(G) = Z(D)$;
\item[$(iii)$] $G = D \ltimes C$, where $C$ is a cyclic $q^{\prime}$-group, $D$ is a cyclic $q$-group (here $q$ denotes the smallest prime dividing the order of $G$) and $1 < Z(G) < D$.
\end{itemize}
\end{theorem}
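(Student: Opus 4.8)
The plan is to dispose of the backward implications quickly — for each of the three families in the statement one checks directly, via Lemma~\ref{l:family}, that every non-cyclic subgroup contains the centre — and to put the work into the forward direction. So assume $G\in\X$, and let $F:=\Fitt(G)$. Since $P$ is a central Sylow $p$-subgroup of $C_G(P)$, Burnside's normal $p$-complement theorem gives $C_G(P)=P\times K$ with $K=O_{p'}(C_G(P))\trianglelefteq G$. No subgroup of $K$ can be non-cyclic, for a non-cyclic $L\le K$ would satisfy $P\le C_G(K)\le C_G(L)\le L$, which is impossible as $K$ is a $p'$-group; hence $K$, and therefore $C_G(P)$, is cyclic. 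Being cyclic and normal, $C_G(P)\le F$, while $F=P\times O_{p'}(F)\le C_G(P)$ because $P=O_p(G)$ is a direct factor of the nilpotent group $F$ which is centralized by the rest. Thus $F=C_G(P)$ is cyclic; as $G$ is supersoluble, $C_G(F)\le F$, so $F$ is self-centralizing and $\bar G:=G/F$ embeds in the abelian group $\Aut(F)$.

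Next I would show that every abelian subgroup of $G$ is cyclic. Otherwise $C_\ell\times C_\ell\le G$ for some prime $\ell\ne p$ (Sylow $p$-subgroups being cyclic), and applying the defining property of $\X$ to the subgroups $P_1\langle y\rangle$, where $y$ runs over the elements of order $\ell$ of $C_\ell\times C_\ell$ and $\langle y\rangle$ acts on $P_1\cong C_p$ either trivially or fixed-point-freely, forces two distinct subgroups of order $\ell$ to coincide, a contradiction. Consequently every Sylow subgroup of $G$ is cyclic, with the single possible exception of a generalized quaternion Sylow $2$-subgroup, by the classical theorem already used in the proof of Theorem~\ref{t:finitep} (\cite[Theorem~11.6]{Car99}).

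\emph{Case A: a Sylow $2$-subgroup $D$ of $G$ is generalized quaternion.} Here $2$ is the smallest prime dividing $|G|$, so supersolubility yields a normal Hall $2'$-subgroup $C$ and $G=D\ltimes C$. For a non-abelian $L\le C$ one has $C_D(L)\le L\cap D=1$, and $D$ has a unique involution; exploiting this together with the membership of the quaternion subgroups of $D$ in $\X$ and with the first reduction, I expect to deduce in turn that the central involution of $D$ is central in $G$, that $C_C(D)=1$ (a central element of odd order cannot lie inside a quaternion subgroup), hence $Z(G)=Z(D)$, and finally that $C$ is abelian — so cyclic — since otherwise the $2$-element generating $Z(D)$ would lie in the $2'$-group $C$. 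This is case $(ii)$.

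\emph{Case B: all Sylow subgroups of $G$ are cyclic.} Then $G$ is a $Z$-group, so $G=D\ltimes C$ with $C=G'$ a cyclic normal Hall subgroup and $D$ cyclic; if $D$ acts fixed-point-freely on $C$ then $G$ is a Frobenius group, which is exactly case $(i)$. Suppose it does not, and let $q$ be the smallest prime dividing $|G|$; since a Sylow $q$-subgroup is cyclic, $G$ is $q$-nilpotent, and I would then take $D$ to be a Sylow $q$-subgroup and $C=O_{q'}(G)$. The remaining task, which I expect to be the main obstacle, is to show that this non-Frobenius situation forces case $(iii)$. The key device is: if some element of $\bar G$ fixes a nontrivial $c_0\in F$ of prime order $\ell$ while acting nontrivially on the Sylow $t$-subgroup of $F$ for some $t\ne\ell$, then the non-abelian subgroup $H=\langle c_t,\tilde x\rangle$ — with $c_t$ a generator of that Sylow $t$-subgroup and $\tilde x$ a suitable $q$-power lift of the given element — satisfies $c_0\in C_G(H)$ although $\ell\nmid|H|$, contradicting $G\in\X$. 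Eliminating all such configurations should pin down the structure: $\bar G$ is a $q$-group acting trivially on $O_q(G)$ (which then has order $q$) and fixed-point-freely on the cyclic $q'$-group $C$, with $1<Z(G)<D$, because $Z(G)\ge O_q(G)>1$ while $Z(G)\cap C=C_C(D)=1$ and $Z(G)\ne D$ by non-nilpotence. This is case $(iii)$.
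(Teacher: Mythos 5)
Your opening reduction is sound and genuinely different from the paper's: you get $C_G(P)=P\times K$ with $K$ cyclic (via the $\X$-property applied to a putative non-cyclic subgroup of $K$), hence $\Fitt(G)=C_G(P)$ cyclic and self-centralizing, and then rule out $C_\ell\times C_\ell$ to make all Sylow subgroups cyclic or generalized quaternion; the paper instead works throughout with $C_G(P_1)$, shows it is cyclic with $G/C_G(P_1)$ cyclic, and then proves that $Z(G)$ and $G/C_G(P_1)$ are $q$-groups for the smallest prime $q$. Your steps up to the Sylow dichotomy are correct (the $C_\ell\times C_\ell$ elimination can be completed: the image of such a subgroup in $\Aut(P_1)$ is cyclic, so some subgroup of order $\ell$ centralizes $P_1$ and is then trapped in the Frobenius group $P_1\langle y_1\rangle$, forcing it into $P_1$), and Case A can be finished along the lines you hint at, e.g.\ the kernel of $D\to\Aut(P)$ is nontrivial, hence contains the unique involution $z$, so $z\in C_G(P)=\Fitt(G)$ and $\langle z\rangle$ is characteristic in that cyclic normal subgroup, hence central.

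The genuine gap is that the forward direction is not actually carried out where the work lies. In Case A every assertion after the first sentence is prefaced by ``I expect to deduce'', and in Case B the non-Frobenius situation --- which is exactly case $(iii)$ --- is explicitly deferred as ``the main obstacle'': you state one ``key device'' but never perform the elimination, never prove that $O_{q'}(G)$ is cyclic (this is part of $(iii)$ and does not follow from the Z-group decomposition, since $O_{q'}(G)\supseteq G'$ could a priori be non-abelian metacyclic), never prove $O_q(G)\neq 1$ (needed for $Z(G)>1$), and the structure you claim the elimination would yield is partly false: for $G=C_3\rtimes C_8$ with the generator of $C_8$ inverting $C_3$, the only non-cyclic subgroup is $G$ itself, so $G\in\X$ and satisfies $(iii)$, yet $O_2(G)\cong C_4$ has order $4\neq q$ and $D=C_8$ does not act fixed-point-freely on $C$. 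So the endgame cannot be ``pinned down'' as you describe; the correct target is only $1<Z(G)<D$ with $C$ cyclic, and reaching it (the paper does so by showing $Z(G)$ and $G/C_G(P_1)$ are $q$-groups and that Sylow subgroups for primes exceeding $q$ are cyclic) is missing. A smaller omission: the converse for $(ii)$ and $(iii)$ is not a bare ``direct check''; one needs the observation that $Z(G)$ sits inside every conjugate of $D$ as its unique subgroup of its order, so every non-cyclic subgroup has order divisible by a large enough power of $2$ (resp.\ $q$) and therefore contains $Z(G)$, which is the input Lemma \ref{l:family} requires.
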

\begin{proof}
Let $G$ belong to the class $\X$. First, by \cite[Corollary 4.35]{Isa08} we get $C_X(P_1) = C_X(P)$, hence $C_G(P_1) = PC_X(P_1) = P \times C_X(P_1)$. Now from $P_1 \leq C_G(C_X(P_1))$ it follows that $C_X(P_1)$ is cyclic, hence $C_G(P_1)$ is cyclic. Write $C_G(P_1) = \langle c \rangle$ and $G/C_G(P_1) = \langle dC_G(P_1)\rangle$. Then $G = \langle c \rangle \langle d \rangle$, where $\langle c \rangle$ is normal in $G$.

If $Z(G) = 1$, then $\langle c \rangle \cap \langle d \rangle = 1$. In this case it is easy to see that $\langle d \rangle$ acts fixed point freely on  $\langle c \rangle$. For, if there exists $1 \not = d^n \in C_G(c^m)$ with $c^m \not = 1$ then we have $c^m \in C_G(P_1\langle d^n \rangle) \leq P_1\langle d^n \rangle$ since $d^n \notin C_G(P_1)$, thus $c^m \in \langle c \rangle \cap P_1\langle d \rangle = P_1$, a contradiction. Therefore $(i)$ holds in this case.

Now assume $Z(G) \not = 1$. Since $G$ is a non-abelian group in the class $\X$ we know that $Z(G)$ is cyclic. Let $r$ be the smallest prime dividing the order of $G/C_G(P_1)$. Then $q\leq r<p$, and any Sylow $r$-subgroup $R$ of $G$ is not contained in $C_G(P_1)$. Then $P_1R$ is non-abelian, hence $C_G(P_1R)\leq P_1R$. In particular $Z(G)\leq P_1R$, so every element $z\in Z(G)$ having prime power order $s^{\alpha}$ is contained in $P_1R$. Since the element $zP_1$ of $P_1R/P_1$ has order a power of $r$, we get $r=s$. Hence $Z(G)$ is an $r$-group. The same argument proves that $G/C_G(P_1)$ is also an $r$-group. Notice that $r=q$ is the smallest prime dividing the order of $G$. If not, any non-trivial Sylow $q$-subgroup of $G$ is contained in $C_G(P_1)$. Thus, since $C_G(P_1)$ is cyclic, there exists a normal subgroup $N$ of $G$ of order $q$. By conjugation, each element $g\in G\setminus N$ induces on $N$ an automorphism of order dividing the order of $G$. On the other hand, $\Aut(N)$ is cyclic of order $q-1$, hence $g\in C_G(N)$ and so $N\leq Z(G)$. This is a contradiction, since $Z(G)$ is a $r$-group. In particular, $Z(G)$ is properly contained in any Sylow $q$-subgroup $Q$ of $G$. Moreover, $Q$ has to be cyclic or generalized quaternion: if not, $Q$ has subgroups $Q_1\not=Q_2$ of order $q$ (see for instance \cite[5.3.6]{Rob82}), each of which must contain $Z(G)$, a contradiction. Finally, every Sylow $r$-subgroup $R$ of $G$ with $q<r$ has to be cyclic. Otherwise, with $R_1\not=R_2$ subgroups of $R$ having order $r$, the subgroup $V=R_1\times R_2$ is non-cyclic, so $Z(G)\leq C_G(V)\leq V$, a contradiction since $r\not=q$.

Now if either the order of $G$ is odd or a Sylow $2$-subgroup of $G$ is cyclic, then $(iii)$ holds where $D=Q$ and $C$ is the largest subgroup of $C_G(P_1)$ having order not divisible by $q$. On the other hand, if a Sylow 2-subgroup $Q$ of $G$ is generalized quaternion then $Z(G) = Z(Q)$ and $(ii)$ holds.

Conversely, if $(i)$ holds then $G$ is in $\X$ by Lemma \ref{l:family}. If $(ii)$ holds and $Z(G)$ has order $2^i$, then $Z(G)$ is the only subgroup of $G$ of order $2^i$. Hence if $S$ is any non-cyclic subgroup of $G$ then $2^j$ divides the order of $S$ for some $i<j$, and thus $Z(G)<S$. Similarly if $(iii)$ holds then $Z(G)$ is the only subgroup of $G$ of order 2 and again every non-cyclic subgroup of $G$ contains $Z(G)$. In both cases, the result follows by Lemma \ref{l:family}.
\end{proof}

\begin{example}
\label{e:csupersoluble}
The symmetric group $\Sym_3$ is a split extension of the group $C_3$ by $C_2$ with the action of $C_2$ being fixed point free. Clearly, it is the smallest group with the structure described in Theorem \ref{t:csupersoluble} $(i)$.

Now consider the group generated by $g_1, g_2$, $g_3$, $g_4$ subject to power relations $g_1^2 = g_2^2 = g_3^2 = g_4^3 = 1$ and commutator relations $[g_2, g_1] = g_3, [g_4, g_1] = g_4$, and $[g_i, g_j] = 1$ for the remaining $i > j$. This group is a split extension of the group $C_3$ by $Q_8$ with $Z(G_2) = Z(Q_8)$.  It is the smallest one among groups occurring in Theorem \ref{t:csupersoluble} $(ii)$.
 
Finally, the group
$$\left\langle a,b,c\,|\, a^2 = b, b^2 = c^3 = 1, [b, a] = 1, [c, a] = c, [c, b] = 1\right\rangle$$
is a split extension of the group $C_3$ by $C_4$ with $1 < Z(G_3) < C_4$. It is the smallest group with the structure described in Theorem \ref{t:csupersoluble} $(iii)$.
\end{example}

Notice that a finite group of odd order in the class $\X$ need not be supersoluble.  The smallest such non-supersoluble group is
$$\left\langle a,b,c\,|\, a^3 = b^5 = c^5 = 1, [b,a] = c^2, [c,a] = bc^2, [c,b] = 1\right\rangle,$$ which is a split extension of $C_5 \times C_5$ by $C_3$.

\begin{proposition}
\label{p:dihedral}
The dihedral group $D_{2n}$ is in the class $\X$ if and only if $n$ is odd or a power of $2$.
\end{proposition}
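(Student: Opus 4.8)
The plan is to prove the two implications separately, leaning on Lemma \ref{l:family} and on the classifications of abelian and $p$-groups in $\X$ already established. Throughout I would use that every subgroup of $D_{2n}$ is either cyclic or isomorphic to $D_{2d}$ for some divisor $d$ of $n$ (the rotation subgroup of such a dihedral subgroup being the unique cyclic subgroup of order $d$ inside $\langle a\rangle$).

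\textbf{Sufficiency.} If $n$ is a power of $2$, then $D_{2n}$ is $C_2$ (when $n=1$), $C_2\times C_2$ (when $n=2$), or a dihedral $2$-group of order at least $8$, and each of these lies in $\X$ by Theorems \ref{t:abelian} and \ref{t:finitep}. If $n$ is odd, I would apply Lemma \ref{l:family} to the subgroup-closed family consisting of all subgroups of $D_{2n}$: every member is cyclic or of the form $D_{2d}$ with $d\mid n$ odd, and such a dihedral group is either cyclic (when $d=1$) or centerless (when $d\ge 3$); in either case its non-cyclic subgroups vacuously contain its trivial center, so the lemma yields $D_{2n}\in\X$.

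\textbf{Necessity.} Conversely, suppose $n$ is even but not a power of $2$, and write $D_{2n}=\langle a,b \mid a^n=b^2=1,\ a^b=a^{-1}\rangle$, so that $z=a^{n/2}$ generates $Z(D_{2n})\cong C_2$. Pick an odd prime $p$ dividing $n$ and set $H=\langle a^{n/p},b\rangle\cong D_{2p}$, which is non-cyclic. Its subgroup of rotations is $H\cap\langle a\rangle=\langle a^{n/p}\rangle$, of order $p$; since $z$ has order $2$ and $p$ is odd, $z$ is not one of these rotations, and $z$ is certainly not a reflection, so $z\notin H$. As $z$ is central it lies in $C_{D_{2n}}(H)\setminus H$, whence $H$ is a non-cyclic subgroup that is not self-centralizing and $D_{2n}\notin\X$.

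I do not expect any genuine difficulty here: once the subgroup lattice of $D_{2n}$ is recalled, both directions are short. The only step needing a moment's attention is checking, in the necessity part, that the central involution $a^{n/2}$ escapes the chosen subgroup $H$ — equivalently that $n/p$ does not divide $n/2$ — which is precisely the point where the oddness of $p$ is used.
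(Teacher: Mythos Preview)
Your proof is correct and follows essentially the same route as the paper: both use Theorem~\ref{t:finitep} for the $2$-power case, apply Lemma~\ref{l:family} together with the triviality of $Z(D_{2d})$ for odd $d\ge 3$ when $n$ is odd, and exhibit the central involution $a^{n/2}$ lying outside the non-cyclic subgroup $\langle a^{n/p},b\rangle\cong D_{2p}$ when $n$ is even with an odd prime factor $p$. Your write-up is in fact slightly more careful than the paper's, explicitly treating the degenerate cases $n=1,2$ and spelling out why $a^{n/2}\notin H$.
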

\begin{proof}
Denote the generators of $D_{2n}$ by $x$ and $y$ with $x^2 = y^{n} = 1$ and $y^x = y^{-1}$. If $n$ is a power of $2$, then the group $D_{2n}$ is in the class $\X$ by Theorem \ref{t:finitep}. So assume that there exists an odd prime $p$ dividing $n$. If $n$ is even, the element $y^{n/2}$ is central in $D_{2n}$, but not contained in the subgroup $\langle x, y^{n/p} \rangle$, so the group $D_{2n}$ does not belong to $\X$. Now let $n$ be odd. Note that any non-cyclic subgroup of $D_{2n}$ is again a dihedral group of order $2m$ with $m$ odd. Since the center $Z(D_{2n})$ is trivial, the result follows from Lemma \ref{l:family}.
\end{proof}

The next theorem classifies all finite simple groups in the class $\X$.

\begin{theorem}
\label{t:finitesimple}
The finite non-abelian simple groups in the class $\X$ are precisely the groups $\PSL_2(q)$, where $q = 4$ or $q = 9$ or $q$ is a Fermat or a Mersenne prime.
\end{theorem}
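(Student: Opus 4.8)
\emph{Proof plan.}
The plan is to first determine which $2$-groups can occur as a Sylow $2$-subgroup of a finite nonabelian simple group in $\X$, then to apply two classical ($\mathrm{CFSG}$-free) $2$-local recognition theorems to obtain a short list of candidates, and finally to settle the question for the surviving family $\PSL_2(q)$. Since $\X$ is subgroup closed, a Sylow $2$-subgroup $S$ of a finite $G\in\X$ is itself a $2$-group in $\X$, so by Theorems~\ref{t:abelian} and~\ref{t:finitep} it is cyclic, elementary abelian of order $4$, dihedral, semidihedral, or generalized quaternion. If $G$ is nonabelian simple, then $G$ has even order, a cyclic $S$ is impossible (a finite simple group has no cyclic Sylow $2$-subgroup, by an elementary transfer argument), and a generalized quaternion $S$ is impossible by the Brauer--Suzuki theorem. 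Thus $S$ is dihedral of order at least $4$ or semidihedral, and the Gorenstein--Walter classification of the simple groups with dihedral (or Klein four) Sylow $2$-subgroups together with the Alperin--Brauer--Gorenstein classification of those with semidihedral Sylow $2$-subgroups leaves exactly the possibilities $G\cong\PSL_2(q)$ with $q$ odd and $q\ge 5$, $G\cong A_7$, $G\cong\PSL_3(q)$ with $q\equiv 3\pmod 4$, $G\cong\PSU_3(q)$ with $q\equiv 1\pmod 4$, and $G\cong M_{11}$.

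The next step is to discard all candidates except $\PSL_2(q)$. The group $A_7$ is not in $\X$, since the non-cyclic subgroup $A_4$ supported on four of the points is centralized by a $3$-cycle on the remaining three, so its centralizer is not contained in it. For $\PSL_3(q)$ with $q\ge 7$ and for $\PSU_3(q)$ with $q\ge 5$, a maximal torus provides an abelian non-cyclic subgroup of order $(q-1)^2/\gcd(3,q-1)$, respectively $(q+1)^2/\gcd(3,q+1)$, which is not isomorphic to $C_p\times C_p$ and hence, by Theorem~\ref{t:abelian}, is not in $\X$. The single remaining group $\PSL_3(3)$ is excluded by a separate argument: its point-stabilizing maximal parabolic $3^2\rtimes\GL_2(3)$ is not in $\X$, because the stabilizer in $\GL_2(3)$ of a nonzero vector $v$ of the natural module is a non-cyclic group isomorphic to $\Sym_3$, it centralizes $v$, and $v$ lies in the normal subgroup $3^2$ and not in that stabilizer. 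Finally $M_{11}$ contains $\PSL_2(11)$, hence a dihedral group of order $12$, which is not in $\X$ by Proposition~\ref{p:dihedral}.

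It then remains to characterize the prime powers $q$ for which the simple group $\PSL_2(q)$ lies in $\X$; recall that $\PSL_2(q)$ is simple exactly when $q\ge 4$, and that $\PSL_2(4)\cong\PSL_2(5)$ with $5$ a Fermat prime. For the necessity, $\PSL_2(q)$ contains dihedral subgroups of orders $q-1$ and $q+1$, namely the normalizers of the two classes of maximal tori, so by Proposition~\ref{p:dihedral} both $(q-1)/\gcd(2,q-1)$ and $(q+1)/\gcd(2,q-1)$ are odd or powers of $2$; for $q$ odd this forces $q-1$ or $q+1$ to be a power of $2$, and a theorem of Mihailescu (Catalan's conjecture), or already its classically known special cases, then shows that such an odd prime power $q=2^k\pm 1$ is a Fermat prime, a Mersenne prime, or $9$. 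For the sufficiency I would invoke Dickson's description of the subgroups of $\PSL_2(q)$ and check, for each admissible $q$, that every non-cyclic subgroup is self-centralizing: the dihedral subgroups are in $\X$ by Proposition~\ref{p:dihedral} and the divisibility constraints just obtained, and they are self-centralizing in $\PSL_2(q)$; the unipotent subgroups (cyclic when $q$ is prime, isomorphic to $C_p\times C_p$ when $q=9$) and the maximal tori are self-centralizing; a subgroup of a Borel subgroup of shape $C_p^m\rtimes C_t$ with $t>1$ has trivial centralizer because $C_t$ acts fixed-point-freely on the unipotent part; and the subgroups isomorphic to $A_4$, $\Sym_4$, $A_5$, together with the subfield subgroups $\PSL_2(q_0)$ and $\operatorname{PGL}_2(q_0)$ arising when $q=9$, are themselves in $\X$ (a direct check, resting on the fact that the four-subgroups of $\PSL_2(q)$ are self-centralizing) and have trivial centralizer in $\PSL_2(q)$.

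I expect the main obstacle to be the sufficiency step for $\PSL_2(q)$: it requires the complete subgroup lattice of $\PSL_2(q)$ together with its centralizer structure, and a fairly long case distinction in which the small values of $q$ and the exceptional subgroups $A_4$, $\Sym_4$, $A_5$ must be handled individually. A secondary delicate point is the elimination of $\PSL_3(q)$ and $\PSU_3(q)$: for the smallest admissible $q$ the relevant torus may happen to be a Klein four group (as for $\PSL_3(3)$), so that the parabolic argument indicated above is needed instead of the torus argument.
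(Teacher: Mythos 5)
Your proposal is correct, and its overall architecture coincides with the paper's: restrict the Sylow $2$-subgroup via Theorems~\ref{t:abelian} and~\ref{t:finitep}, rule out the cyclic and generalized quaternion cases (transfer, Brauer--Suzuki), invoke the Gorenstein--Walter and Alperin--Brauer--Gorenstein recognition theorems, eliminate $\A_7$, $M_{11}$, $\PSL_3(q)$, $\PSU_3(q)$, derive the arithmetic condition on $q$ from dihedral subgroups of order $q\pm1$ and Proposition~\ref{p:dihedral}, and verify sufficiency through the Dickson/Huppert subgroup list. Where you differ is in the eliminations and in the organization of the sufficiency step, and your choices are arguably cleaner. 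For $\PSU_3(q)$ you use the image of the diagonal torus, an abelian non-cyclic group of order $(q+1)^2/\gcd(3,q+1)$ that is not of type $C_p\times C_p$, which disposes of all $q\equiv 1\pmod 4$ at once; the paper instead descends to $\PSU_3(p)$, treats $p=5$ via an $\A_7$ maximal subgroup and $p>5$ via the Alperin--Brauer--Gorenstein structure of an involution centralizer ($O(N)\times\SL_2(p)$), which is considerably heavier. For $\PSL_3(q)$ your torus argument covers $q\ge 7$, and your separate parabolic argument for $\PSL_3(3)$ (a vector of the natural module centralizing but not lying in its non-cyclic stabilizer $\Sym_3$ inside $3^2\rtimes\GL_2(3)$) handles a case the paper's own argument does not reach: for $p=3$ the paper's dihedral subgroup $D_{2(p-1)}$ is a Klein four group, which lies in $\X$, so Proposition~\ref{p:dihedral} gives no contradiction there. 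Your exclusions of $\A_7$ (an $\A_4$ centralized by a disjoint $3$-cycle) and $M_{11}$ (via $\PSL_2(11)\ge D_{12}$) are equivalent to the paper's explicit element computations. Finally, for sufficiency you check self-centralization of every non-cyclic Dickson subgroup directly, uniformly in the admissible $q$, whereas the paper reduces via Lemma~\ref{l:family} to showing every proper subgroup of $\PSL_2(p)$ lies in $\X$ and treats $q=4,9$ as $\A_5,\A_6$ by hand; the two routes use the same subgroup data, and, as you anticipate, the only real work in yours is confirming that for the admissible $q$ the dihedral subgroups (whose cyclic parts then sit in tori of odd or $2$-power order) are indeed self-centralizing and that the exceptional subgroups $\A_4,\Sym_4,\A_5$ and the subfield subgroups have trivial centralizer, all of which hold.
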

\begin{proof} 
Let $G$ be a finite simple group in the class $\X$, and let $S$ be a Sylow $2$-subgroup of $G$. If $S$ is abelian, then it is either cyclic or isomorphic to $C_2 \times C_2$ by Theorem \ref{t:abelian}. The cyclic case is impossible (see for instance \cite[Lemma 1.4.1]{Asc11}), whereas if $S \simeq C_2 \times C_2$, the group $G$ must be isomorphic to $\PSL_2(q)$ with $q$ odd by \cite[Chapter 15, Theorem 2.1]{Gor07}. Suppose now that $S$ is not abelian. By Theorem \ref{t:finitep}, it is isomorphic to the dihedral, semidihedral or generalized quaternion group. In the dihedral case, $G$ must be isomorphic to the group $\A_7$ or $\PSL_2(q)$ \cite[Chapter 16, Part 3]{Gor07}. If $S$ is a semidihedral group, then it follows from \cite[Third Main Theorem]{Alp70} that $G$ is isomorphic to either $M_{11}$, $\PSL_3(q)$ with $q \equiv 3 \pmod{4}$ or $\PSU_3(q)$ with $q \equiv 1 \pmod{4}$. Finally, $S$ cannot be a generalized quaternion group by the Brauer-Suzuki theorem \cite[Chapter 12, Theorem 1.1]{Gor07}.

We now verify that of the above, only the groups $\PSL(2,q)$ with $q=4,9$ or a Fermat or Mersenne prime belong to the class $\X$. First off, the alteranting group $\A_7$ contains the group $\langle (1,2)(3,4)(5,6,7), (1,3)(2,4) \rangle \cong C_6 \times C_2$, so it cannot belong to $\X$ by Theorem \ref{t:abelian}. Next, the Mathieu group $M_{11}$ contains the group $\langle (1,2,3)(4,10)(5,11,6,8,9,7), (2,3)(5,11)(6,7)(8,9) \rangle \cong D_{12}$, so it is not in $\X$ by Proposition \ref{p:dihedral}. The groups $\PSL_3(q)$ and $\PSU_3(q)$ contain as subgroups the groups $\PSL_3(p)$ and $\PSU_3(p)$. Let $g$ be the generator of the group of units of $\GF(p)$. The group $\PSL_3(p)$ contains the subgroup
\[
	\left\langle
		\left[
			\begin{array}{ccc}
			0 & 1 & 0 \\
			1 & 0 & 0 \\
			0 & 0 & -1 \\
			\end{array}
		\right],
		\left[
			\begin{array}{ccc}
			g^{p-2} & 0 & 0 \\
			0 & g & 0 \\
			0 & 0 & 1 \\
			\end{array}
		\right]
	\right\rangle,
\]
which is easily seen to be isomorphic to the dihedral group $D_{2(p-1)}$. In light of Proposition \ref{p:dihedral} and the fact that $p \equiv 3 \pmod{4}$, the group $\PSL_3(q)$ is therefore not in the class $\X$. Now consider the group $\PSU_3(p)$ with $p \equiv 1 \pmod{4}$. For $p = 5$, the group $\PSU_3(5)$ has a maximal subgroup isomorphic to $\A_7$ \cite{Con85}. The latter group contains the subgroup $\langle (1 2)(3 4)(5 6 7), (1 2)(6 7) \rangle$, which is isomorphic to the dihedral group $D_{12}$. Hence the group $\PSU_3(5)$ does not belong to $\X$. Assume now that $p > 5$. Let $N$ be the centralizer of any involution in $\PSU_3(p)$. It is shown in \cite[First Main Theorem]{Alp70} that $N$ is isomorphic to a quotient of $\GU_2(p)$ by a central subgroup of order $\gcd(p+1, 3)$. By the remark following \cite[Theorem B]{Alp70}, $N$ contains a subgroup $L_0 \cong O(N) \times \SL_2(p)$, where $O(N)$ is the largest normal subgroup of odd order of the group $N$. Denote by $r$ the largest odd number dividing $p+1$. Note that $r > 3$ since $p > 5$ and $p \equiv 1 \pmod{4}$. Let $g$ be the generator of the group of units of $\GF(p^2)$, and let $A$ be the scalar matrix in $\GU_2(p)$ with elements $g^{(p^2 - 1)/r}$ on the diagonal. The matrix $A$ is of order $r$ and the subgroup $\langle A \rangle$ is a central subgroup of odd order exceeding $3$ in $\GU_2(p)$. This means that the subgroup $O(N)$ of $N$ is not trivial. Hence $L_0$ contains an abelian subgroup $C_{r'} \times C_{2p'}$, where $r' = r/\gcd(q+1, 3)$ and $p'$ is any prime dividing $r'$. By Theorem \ref{t:abelian}, the group $\PSU_3(p)$ and hence $\PSU_3(q)$ does not belong to the class $\X$.

Finally, consider the groups $\PSL_2(q)$ with $q = p^m$. We argue as in \cite[Theorem 2]{Suz60}. Since $q$ is odd, the centralizer of any involution is a dihedral group of order $q+1$ or $q-1$ according to $q \equiv -1$ or $q \equiv 1 \pmod{4}$ \cite[Chapter 20]{Bur11}. For the group $\PSL_2(q)$ to be in $\X$, we must therefore have that either $q+1$ or $q-1$ is a power of $2$ by Proposition \ref{p:dihedral}. Suppose first that $q + 1 = 2^n$. If $m$ is even, we have $q + 1 = p^m + 1 \equiv 2 \pmod{4}$, which is impossible. So $m$ is odd any we may write $m = 2k + 1$. We then have $q + 1 = (p+1)l$ with $l$ being odd, which is only possible for $l = 1$, thus $q = p = 2^n - 1$ is a Mersenne prime. Now assume that $q - 1 = 2^n$. If $m$ is odd, the same argument as before applies to show that $q = p$ is a Fermat prime. If $m$ is even, then $q = l^2$, so $2^n = q - 1 = (l-1)(l+1)$, which is only possible for $l - 1 = 2$, and this implies $q = 9$. Note that the groups $\PSL_2(4)$ and $\PSL_2(9)$ are respectively isomorphic to the alternating groups $\A_5$ and $\A_6$. The group $\A_5$ belongs to $\X$ by Lemma \ref{l:family}, since its center is trivial and the isomorphism types of its non-cyclic subgroups are $C_2 \times C_2$, $\Sym_3$, $D_{10}$ and $\A_4$, which are all in $\X$. By the same reasoning, note that the non-cyclic subgroups of $\A_6$ are all isomorphic to $D_8$, $C_3 \times C_3$, one of the subgroups of $\A_4$, or one of the subgroups of a certain split extension of $C_3 \times C_3$ by $C_4$. These groups all belong to $\X$, so $\A_6$ does too. Now consider one of the groups $\PSL_2(p)$ with $p$ a Mersenne or a Fermat prime. Note that in case $p=3$, the group $\PSL_2(p)$ is isomorphic to $\A_4$, which belongs to $\X$. Assume now that $p > 3$. In light of Lemma \ref{l:family}, it suffices to prove that every proper subgroup of $\PSL_2(p)$ is in $\X$. The subgroup structure of $\PSL_2(p)$ is given in \cite[III, \S 8]{Hup67}, stating that any proper non-cyclic subgroup of $\PSL_2(p)$ is isomorphic to one of the following groups: the dihedral group of order $D_{2n}$ with $n$ being either a power of $2$ or odd, $\A_4$, $\Sym_4$, $\A_5$, or a subgroup of the normalizer of a Sylow $p$-subgroup that contains the Sylow subgroup itself. The dihedral groups are in $\X$ by Proposition \ref{p:dihedral}, and it is verified as above that the three permutation groups also belong to $\X$. Now consider the normalizer $N$ of a Sylow $p$-subgroup $P$ of $\PSL_2(p)$. After a possible conjugation, we may assume that $P$ is given in terms of mappings of the projective line as $P = \{ g \in \PSL_2(p) \mid s^g = s + \beta \text{ for some } \beta \in \GF(p) \}$. It is straightforward that $C_{N}(P) = P$, so any subgroup of $N$ containing $P$ must also contain the centralizer of itself in $N$. Thus $N$ is in $\X$, as required.
\end{proof}
%%%%%%%%%%%%%%%%%%%%%%%%%%%%%%%%%%%%%%%%%%%%%%%%%%%%%%%%%%%%%%%%%%%%%%%
%%%%%%%%%%%%%%%%%%%%%%%%%%%%%%%%%%%%%%%%%%%%%%%%%%%%%%%%%%%%%%%%%%%%%%%%%
\goodbreak

\end{document}